\font\co=lcircle10
\def\boxcross{\ \smash{\lower6.5pt\hbox{\rlap{\hskip4.5pt\vrule height13.5pt}}
                \raise0pt\hbox{\rlap{\hskip-2pt \vrule height.4pt depth0pt
                        width13.5pt}}}\hskip12.7pt}
\def\boxelbow{\ \hskip.1pt\smash{%
               \hbox{\co \hskip 5.5pt\rlap{\mathsurround=0pt\rlap{\mathsurround=0pt\char'006}\lower0.4pt\rlap{\char'004}}
                \lower6.5pt\rlap{\hskip-0.2pt\vrule height3pt}
                \raise3.5pt\rlap{\hskip-0.2pt\vrule height3.2pt}}
                \hbox{%
                  \rlap{\hskip-6.4pt \vrule height.4pt depth0pt
width2.5pt}%
                  \rlap{\hskip4.05pt \vrule height.4pt depth0pt
width3.1pt}}}
                \hskip8.7pt}
\newtheorem{theorem}{Theorem}[section]
\newtheorem{Theorem}[theorem]{Theorem}
\newtheorem*{thm*}{Theorem}
\newtheorem{Lemma}[theorem]{Lemma}
\newtheorem{Proposition}[theorem]{Proposition}
\newtheorem{Corollary}[theorem]{Corollary}
\newtheorem*{Conjecture*}{Conjecture}
\theoremstyle{definition}
\theoremstyle{remark}
\newcommand\onto{\twoheadrightarrow}
\def\acts{\hspace{.1cm}{
    \setlength{\unitlength}{.25mm}
    \linethickness{.12mm}
    \begin{picture}(8,8)(0,0)
    \qbezier(7,6)(4.5,8.3)(2,7)
    \qbezier(2,7)(-1.5,4)(2,1)
    \qbezier(2,1)(4.5,-.3)(7,2)
    \qbezier(7,6)(6.1,7.5)(6.8,9)
    \qbezier(7,6)(5,6.1)(4.2,4.4)
    \end{picture}\hspace{.1cm}
    }}
\newcommand\defn[1]{{\bf #1}}
\newcommand\calO{{\mathcal O}}
\newcommand\calD{{\mathcal D}}
\newcommand\calF{{\mathcal F}}
\newcommand\into{\hookrightarrow}
\newcommand\calL{{\mathcal L}}
\newcommand\lie[1]{{\mathfrak #1}}
\newcommand\naturals{{\mathbb N}}
\newcommand\reals{{\mathbb R}}
\newcommand\complexes{{\mathbb C}}
\newcommand\integers{{\mathbb Z}}
\newcommand\tensor{\otimes}
\newcommand\iso{\mathrel{\cong}}
\newcommand\junk[1]{}
\newcommand\codim{{\mathop{\mathrm{codim}}}}
\begin{document}

\title[Positive formul\ae\ for $SL_3(\reals)$ $K$-types 
  and a smooth $K$-orbit closure Blattner formula]
{Positive formul\ae\ for $K$-types of $SL_3(\reals)$-irreps \\
  \ and a Blattner formula for smooth $K$-orbit closures}
\author{Allen Knutson}
\date{September 30, 2014}
\dedicatory{To David Vogan for his $60$th birthday}

\begin{abstract}
  We prove a version of Blattner's conjecture, 
  for irreducible subquotients of principal series representations
  with integral infinitesimal character
  of a real reductive Lie group whose Beilinson-Bernstein 
  $\calD$-module is supported on a $K$-orbit with smooth closure.
  (The cases usually considered are closed orbits, or their
  preimages along $G/B_G^- \to G/P^-$.)
  We apply this to $G_\reals = SL_3(\reals)$, where all four
  $K$-orbits on $G/B_G^-$ have smooth closure, and refine the resulting
  alternating-sum formul\ae\ to ones with only positive terms.
\end{abstract}

\maketitle

\setcounter{tocdepth}{1}


\section{Statement of results}

Let $G_\reals = SL_3(\reals)$, with maximal compact $K_\reals = SO(3,\reals)$
preserving the form with Gram matrix $\left[{}_{{}_1} {}^{{}_1} {}^{{}^1}\right]$,
and flag manifold $G/B_G^- \iso \{ (0 < V_1 < V_2 < \complexes^3) \}$. 
Let $rank(V)$ denote the rank on $V \leq \complexes^3$ of the restriction of 
the $K$-invariant orthogonal form on $\complexes^3$.
Then $K$ preserves the smooth subvarieties 
$$ \calO_1 := \{ (V_1 < V_2) : rank(V_1) = 0 \},\quad
   \calO_2 := \{ (V_1 < V_2) : rank(V_2) = 1 \} $$
and the full set of $K$-orbit closures is 
$G/B_G^-,\ \calO_1,\ \calO_2,\ \calO_1 \cap \calO_2$, all smooth.

In \S \ref{sec:Blattner} we prove a Blattner-type formula for the
$K$-multiplicities in $(\lie{g},K)$-irreps 
with integral infinitesimal character whose
corresponding $\calD$-modules are supported on smooth $K$-orbit closures.
Since the standard modules for the open orbit on $SL(3)/B^-$ 
have composition series whose factors are the irreps for the four orbits,
we could hope to define a set $C$ that computes the $SO(3)$-multiplicities
of the standard module and
breaks as $C =\coprod_\calO C_\calO$ giving formul\ae\ for those irreps,
and this is our main result (proved in \S \ref{sec:proofofmain}).

\begin{Theorem}\label{thm:main}
  Fix an infinitesimal character $(a,b) + \rho_G \in \naturals^2$
  of $G_\reals=SL_3(\reals)$.
  Define the set
  $$ C := \left\{(c,d) \in \naturals^2 \ :\ c \equiv a\bmod 2,\
  \text{and if }c=0 \text{ then } d \equiv b\bmod 2 \right\}
  $$
  and the projection $\pi: C\to \naturals$, $(c,d)\mapsto c+d$.
  This $C$ decomposes into four regions:
  $$    C_{G/B_G^-} := \{ (c,d) \in C\ : c\leq a,\ d\leq b \} $$
  $$   C_{\calO_1} := \{ (c,d) \in C\ : c > a,\ d\leq b \} \qquad\qquad
  C_{\calO_2} := \{ (c,d) \in C\ : c \leq a,\ d > b \}  $$
  $$    C_{\calO_1\cap \calO_2} := \{ (c,d) \in C\ : c > a,\ d > b \}   $$
  The $(\lie{sl}_3,SO(3))$-irrep with infinitesimal character $(a,b) + \rho_G$,
  supported on an orbit closure $\calO$, contains the $SO(3)$-irrep of
  dimension $2n+1$ with multiplicity $\#\left( \pi^{-1}(n) \cap C_\calO \right)$.
\end{Theorem}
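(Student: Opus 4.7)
The plan is to apply the Blattner-type formula from \S\ref{sec:Blattner} to each of the four smooth $K$-orbit closures on $G/B_G^-$ separately, and then compare the resulting non-negative lattice-point counts, orbit-by-orbit, with the proposed subsets $C_\calO \subset C$. Since all four orbit closures are smooth, the hypothesis of the \S\ref{sec:Blattner} formula is satisfied in each case, so we automatically obtain a positive (not alternating) expression for the $SO(3)$-multiplicity of each of the four irreducible $(\lie{sl}_3,SO(3))$-modules with infinitesimal character $(a,b)+\rho_G$.

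First I would set notation: the $SO(3)$-irrep of dimension $2n+1$ has highest weight $n \in \naturals$, and the Blattner-type formula from \S\ref{sec:Blattner} will express its multiplicity as a count of integer points in a two-dimensional parameter space (matching the rank of a Cartan of $\lie{sl}_3$) subject to parity/$\rho$-shift conditions arising from Beilinson--Bernstein localization. The projection $\pi(c,d) = c+d$ should come from pairing $(c,d)$ with the simple coroot of $SO(3)$, extracting the $SO(3)$-highest weight. For the open orbit I expect the count to specialize to the classical branching of the finite-dimensional $\lie{sl}_3$-irrep of highest weight $(a,b)$ to $\lie{so}_3$, matching the box $C_{G/B_G^-}$; this is the main sanity check.

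For the three non-open orbits, the Blattner formula produces counts supported in the three complementary quadrants, and I would verify orbit-by-orbit that they reproduce $C_{\calO_1}$, $C_{\calO_2}$, and $C_{\calO_1\cap\calO_2}$. A useful cross-check is that the total count over all of $C$ must recover the $SO(3)$-spectrum of the standard module for the open orbit: this is directly computable by restricting a principal series induction to $K$, and the composition-series relationship between the standard module and the four irreps forces the four $C_\calO$ to partition $C$. Geometrically, the coordinate direction ($c > a$ vs.\ $d > b$) along which each smaller orbit ``adds'' $K$-types beyond the finite-dimensional ones should match the direction in which that orbit's defining rank condition degenerates the pair $(V_1,V_2)$.

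The main obstacle I expect is bookkeeping the asymmetric parity constraint --- the requirement that $d \equiv b \bmod 2$ \emph{only} on the axis $c = 0$. This discontinuity should reflect a boundary phenomenon on the closed orbit $\calO_1 \cap \calO_2$ where an extra stabilizer identifies two would-be lattice-point contributions, and it is the feature of the formula most sensitive to the interaction between the $K$-orbit stabilizers, the half-integer $\rho$-shift, and the choice of polarization in the Beilinson--Bernstein construction. Getting this boundary precisely right, rather than up to a bounded discrepancy, is where the work lies; once it is handled the theorem follows immediately from the four applications of the \S\ref{sec:Blattner} formula.
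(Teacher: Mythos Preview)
Your proposal has a fundamental gap: you assume the \S\ref{sec:Blattner} formula ``automatically'' produces a positive lattice-point count. It does not. Theorem~\ref{thm:smoothorbit} (and its closed-orbit special case Theorem~\ref{thm:Blattnerish}) is an \emph{alternating} sum over $X^{T_K}$, with sign $(-1)^{ind(T_f X)}$; the positivity statement in Theorem~\ref{thm:geometry} only guarantees the \emph{total} is nonnegative for ample $\calL$, not that individual terms are. The entire content of the proof of Theorem~\ref{thm:main} is the orbit-by-orbit conversion of these alternating sums into the positive counts $\#(\pi^{-1}(n)\cap C_\calO)$, and each orbit needs its own argument.

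Concretely: for the closed orbit the two fixed points give two signed terms, and one checks the negative term vanishes for $\lambda\geq 0$, leaving $\kappa(\lambda-(a+b)-3;\{1,2\})$. For $\calO_1$ four fixed points give four terms; two vanish for sign reasons, and the remaining two (of opposite sign) combine as $[\lambda-a-b\leq 2d]-[\lambda-a+1\leq 2d]=[\lambda-a-b\leq 2d\leq\lambda-a]$, after which an explicit bijection to $C_{\calO_1}$ is written down. The $\calO_2$ case is deduced from $\calO_1$ via the duality $(V_1<V_2)\mapsto(V_2^\perp<V_1^\perp)$ together with a separate combinatorial bijection on $C$. The open orbit (the finite-dimensional irrep) is the hardest: six terms reduce to three nonvanishing ones, and matching their signed sum to $C_{G/B_G^-}$ requires an inclusion-exclusion with translates of an auxiliary set $D$, splitting into cases $a$ odd and $a$ even, with a nontrivial boundary cancellation in the even case. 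None of this is ``bookkeeping''; it is the substance of the proof.

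Your location of the $c=0$ parity subtlety is also off. The closed-orbit region $C_{\calO_1\cap\calO_2}$ has $c>a\geq 0$, so $c=0$ never occurs there and no such boundary phenomenon arises. The extra parity condition on the axis $c=0$ is felt only in $C_{G/B_G^-}$ and $C_{\calO_2}$, and in the paper it is exactly the $a$-even cancellation in the open-orbit computation that accounts for it.
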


\begin{figure}[htbp]
  \centering
  \epsfig{file=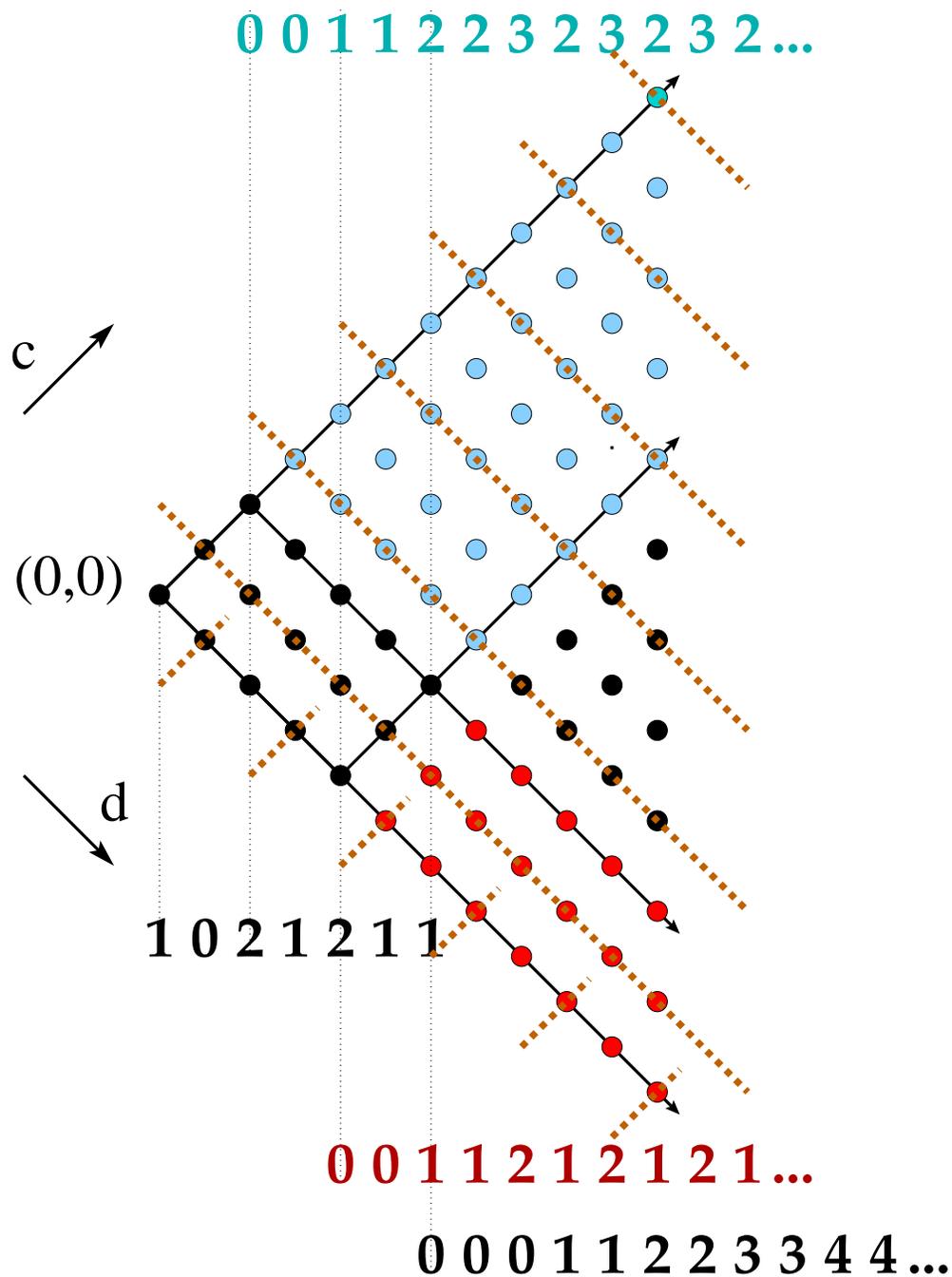,height=7in}
  \caption{The black lines denote the boundaries of the four regions
    in $C$. Each alternate line of constant $c$ is crossed out by
    brown dashed lines, and when $c=0$, every other dot is crossed out.
    When we project the remaining dots vertically to $\naturals$, 
    the four regions give the multiplicities pictured (counted from the
    left corners of the regions). The vertical dotted lines go through
    the corners of $C_{SL_3/B}$ and bound (up to $\rho$-shifts) the regions 
    in the $K$-multiplicities where the piecewise-quasipolynomial changes.}
  \label{fig:Cpic}
\end{figure}

See figure \ref{fig:Cpic} for a picture of $C$ and its four pieces in
the case $(a,b)=(2,4)$, and the multiplicities induced by their
projections to $\naturals$.  It would be interesting to relate the
decomposition of $C$ to the gluing of conormal varieties in \cite{MO}.
Other combinatorial investigations of the Blattner formula 
appear in \cite{WZ08,H}.

\section*{Acknowledgments}

This work was inspired by David Vogan, from whom I've learned so much --
but not the yoga of tempered representations and $\calD$-modules,
which I learned from Peter Trapa, without whom this
paper couldn't have come into existence. (When Peter is unavailable,
a nice paper reference is \cite{M}.)
Of course, one could use a transitivity argument to again credit David.
I also thank Wilfried Schmid for useful discussion about closed orbits
in general and the closed $SO(3)$-orbit in particular.

I thank Birgit Speh for explaining to me that in the long history of
this subject, pictures such as those in figure \ref{fig:Cpic} to compute 
$K$-multiplicities were essentially unpublishable due to technological 
restrictions, but were common on David's blackboard. 
They {\em may} have appeared in her thesis (on $SL_3(\reals)$-representations),
which (unlike \cite{H}) is unfortunately not available on the MIT website.

Roger Howe pointed me to his paper \cite{Howe} which also computes the
$K$-types. Rather than using the hamfisted character-theoretic approach
we take here, he looks much more deeply into operators on the representation
itself, finding a basis for each highest weight space.

\section{A Blattner-type formula for smooth orbit closures}
\label{sec:Blattner}

Blattner's conjecture concerns $K$-weight multiplicities, which we
will derive from $T_K$-weight multiplicities, and those from
$T_K$-equivariant localization in $K$-theory (a slightly nontrivial step,
since localization gives us rational functions which carry slightly less
information than the relevant power series). There is an extra complication,
which is that the $T_K$-weight multiplicities are infinite, and we need
to split them using an extra action of $\complexes^\times$.

\subsection{Generalities about equivariant localization 
  and multiplicity functions}

\begin{Proposition}\label{prop:contribution}
  Let $X \subseteq Y$ be a pair of complex varieties, 
  $Y$ smooth and $X$ projective, carrying compatible actions of a torus $T$,
  with $Y^T$ finite.
  Let $D_X$ be the sheaf of distributions on $Y$ supported on $X$ 
  (so, a module over the filtered sheaf $\calD_Y$ of differential operators),
  and $\calL$ a $T$-equivariant line bundle on $Y$.

  Pick generators for $D_X$ as a $\calD_Y$-module, so we can 
  define an associated graded $gr\ D_X$, 
  which gives a $(T\times \complexes^\times)$-equivariant 
  coherent sheaf on ${\bf Spec}(gr\ \calD_Y) = T^*Y$; here 
  the $\complexes^\times$ acts by {\em inverse} rescaling action the fibers 
  of $T^* Y$ (the inverse to make its character $z$ and not $z^{-1}$ below).
  Extend $\calL$ to an equivariant line bundle on $T^* Y$ by pulling it back 
  along the projection $T^* Y \onto Y$. 
  Then each $H^i(T^* Y;\ gr\ D_X \otimes \calL)$
  has finite-dimensional $\complexes^\times$-weight spaces, 
  hence finite-dimensional $(T\times \complexes^\times)$-weight spaces,
  so its character is a well-defined formal power series.
  (It does not usually have finite-dimensional $T$-weight spaces.)

  The $(T\times \complexes^\times)$-character on 
  $\chi(T^* Y;\ gr\ D_X \otimes \calL) 
  := \sum_i (-1)^i\ H^i(T^* Y;\ gr\ D_X \otimes \calL)$,
  the sheaf Euler characteristic,
  can be computed by equivariant localization 
  (thereby showing that its power series sums to a rational function).
  If $f \in X^T$ is a smooth point of $X$, then the contribution
  to the character from the point $f$ is
  $$ 
  t^{wt(\calL|_f)}
  \prod_{\mu \in wts(T_f X)} \frac{1}{1 - t^{-\mu}}
  \prod_{\mu \in wts(T_f Y/T_f X)} \frac{t^{\mu} z}{1 - t^\mu z}.
  $$
\end{Proposition}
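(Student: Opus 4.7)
The plan has three parts: verify that the cohomology characters are well-defined formal power series, invoke equivariant localization on the non-projective $T^*Y$, and compute the local contribution at a smooth $T$-fixed point of $X$ by a Koszul resolution. For the first part, $gr\ D_X$ is a coherent sheaf on $T^*Y$ whose support (the characteristic variety of $D_X$) is a conic subvariety lying over $X$. Because the projection $\pi: T^*Y \to Y$ is proper on this support and $X$ is projective, decomposing $\pi_*(gr\ D_X)$ by $\complexes^\times$-weight (equivalently by filtration degree) yields a sequence of coherent $\calO_X$-sheaves supported in a bounded-below range of degrees. Each cohomology of $gr\ D_X \otimes \calL$ therefore has finite-dimensional $\complexes^\times$-weight spaces, hence finite-dimensional $T \times \complexes^\times$-weight spaces, and the character is a well-defined formal power series bounded below in the $z$-grading.

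Next, I would invoke the $K$-theoretic Atiyah--Bott--Lefschetz localization formula. Because $\complexes^\times$ scales cotangent fibers to zero, $(T^*Y)^{T \times \complexes^\times} = Y^T$ sitting on the zero section; the non-properness of $T^*Y$ is harmless since the sheaf has proper support (equivalently, push down to $Y$ via $\pi_*$ and localize there). At each fixed point $f \in Y^T$, the tangent space to $T^*Y$ is $T_fY \oplus T^*_fY$, on which $\complexes^\times$ acts trivially on the base and by $z^{-1}$ on the fiber, so the $K$-theoretic Euler class is $\prod_{\mu \in wts(T_fY)}(1 - t^{-\mu})(1 - t^\mu z)$. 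Fixed points lying outside $X$ contribute zero, so the sum effectively runs over $X^T$.

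For the local contribution at $f \in X^T$ smooth in $X$, the sheaf $D_X$ is locally the standard $\calD_Y$-module of distributions on a smooth subvariety, whose associated graded is the structure sheaf of the conormal bundle $T^*_X Y \subset T^*Y$ twisted by $\det(T_fY/T_fX) \cdot z^{\codim X}$; the twist reflects the symbol degree of the generating delta function. I would Koszul-resolve $\calO_{T^*_X Y}$ in $T^*Y$ using the $\codim X$ defining equations of $X$ in $Y$ (with weights in $(T_fY/T_fX)^*$, $z$-trivial) and the $\dim X$ fiber coordinates tangent to $X$ (with weights in $(T_fX)^*$, each multiplied by $z$). Dividing the resulting local $K$-class by the Euler class above yields $\prod_{\mu \in wts(T_fX)} 1/(1 - t^{-\mu}) \cdot \prod_{\mu \in wts(T_fY/T_fX)} 1/(1 - t^\mu z)$, and multiplying by $t^{wt(\calL|_f)}$ and by the twist factor $\prod_{\mu \in wts(T_fY/T_fX)} t^\mu z$ converts the normal-direction factor into $\prod t^\mu z/(1 - t^\mu z)$, matching the claimed formula.

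The main technical obstacle is pinning down the canonical-bundle and $z$-degree twist on $gr\ D_X$ at the smooth locus --- that is, choosing the correct generator of $D_X$ as a $\calD_Y$-module and tracking its filtration degree --- since an error by $\det(T_fY/T_fX) \cdot z^{\codim X}$ would interchange $\prod 1/(1 - t^\mu z)$ with $\prod t^\mu z/(1 - t^\mu z)$. I would fix the normalization in the codimension-one case (a smooth hypersurface, where the twist is transparently the line bundle generated by a single delta function) and then extend by multiplicativity along transverse intersections, giving the general smooth-point contribution.
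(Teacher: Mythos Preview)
Your proposal is correct and follows essentially the same route as the paper: both invoke Thomason/Atiyah--Bott--Lefschetz localization on $T^*Y$, identify the fixed locus with $Y^T$ on the zero section with Euler class $\prod_{\mu\in wts(T_fY)}(1-t^{-\mu})(1-t^\mu z)$, and reduce the smooth-point contribution to the linear model $V_1\subset V_2$. The only real difference is presentational. Where the paper simply asserts that $gr\,D_{V_1}$ is free of rank one on the conormal variety $V_1\times(V_2/V_1)^*$ with generator of $T$-weight $\sum_{\mu\in wts(V_2/V_1)}\mu$ (and leaves the accompanying $z^{\codim X}$ implicit in the final formula), you spell this out via a Koszul resolution and are explicit about both the determinant twist and the $z$-degree of the generating delta function. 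Your final paragraph on pinning down the normalization in codimension one and extending multiplicatively is a reasonable way to justify exactly the step the paper leaves to the reader.
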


\begin{proof}
  For any coherent $(T \times \complexes^\times)$-equivariant sheaf $\calF$
  on $T^* Y$, the localization formula from \cite{T} says
  we can localize the character around the fixed points 
  $(T^* Y)^{T_K \times \complexes^\times} = Y^{T_K}$,
  \begin{eqnarray*}
     Tr((t,z)|_{\chi(T^*Y;\ \calF)}) 
     &=& \sum_{f \in Y^{T_K}}
     \frac{L_f}{\prod_{\mu \in wts(T_f Y)} (1 - t^{-\mu}) (1 - t^\mu z)},
     \qquad (t,z) \in T_K \times \complexes^\times \\
  \end{eqnarray*}
  where the two factors in the product in the denominator come from the tangent
  and cotangent lines in $T_f(T^* Y)$, and the $L_f$ are 
  regular functions on $T\times \complexes^\times$ derived from $\calF$.
  (More specifically, we can compute $L_f$ by picking an 
  equivariant projective resolution of $\calF$ and alternating-sum 
  the characters of the fibers of the resulting vector bundles.)
  In the case at hand, where $\calF = gr\ D_X \otimes \pi^*(\calL)$
  and $D_X$ was supported along $X$,
  the contribution $L_f$ vanishes unless $f\in X$ (and hence $X^{T_K}$).
  
  When $f$ is a {\em smooth} point of $X$, we can reduce to
  the local situation 
  $T \acts (V_1 \subset V_2)$ where $V_1,V_2$ are vector spaces. There
  $$ Tr\left((t,z)|_{\chi(T^* V_2;\ gr\ D_{V_1})}\right) 
  = \prod_{\mu \in wts(V_1)} \frac{1}{1 - t^{-\mu}}
  \prod_{\mu \in wts(V_2/V_1)} \frac{t^{\mu} z}{1 - t^\mu z}
  $$
  which one derives by observing that $gr\ D_{V_1}$ is a free sheaf
  of rank $1$ on the conormal variety $V_1 \times (V_2/V_1)^*$, 
  with generator of $T$-weight $\sum_{\mu \in wts(V_2/V_1)} \mu$.
  Letting $V_1 = T_f X$ and $V_2 = T_f Y$, we get the desired formula.
\end{proof}

We actually want to study this representation
$\chi(T^* Y;\ gr\ D_X \otimes \calL)$ by its power series,
or weight multiplicity function (an $\naturals$-valued
function on the weight lattice), not by its character
(a rational function on $T\times \complexes^\times$)
provided by equivariant localization.
The basic problem in going backwards from the character is that the
two power series $1+x+x^2+\ldots$ and $-x^{-1}-x^{-2}-\ldots$ are not equal, 
but their rational functions $\frac{1}{1-x} = \frac{-x^{-1}}{1-x^{-1}}$ are. 
So how to choose a power series given a rational function,
especially one presented as a sum of rational functions with even
worse denominators?

\begin{Lemma}\label{lem:recoverP}
  Say $P$ is a multivariable power series whose exponent vectors live in a cone
  on which some generic coweight $\tau$ is proper and bounded above.
  Assume $P$ adds up to a rational function $r$ of the form $\sum_f r_f$ 
  where each $r_f$'s denominator is of the form $\prod (1-t^{-\beta})$.  
  If we use the identity
  $\frac{1}{1-x} = \frac{-x^{-1}}{1-x^{-1}}$ to flip the denominators in
  each $r_f$ to make each $\langle \tau, \beta \rangle > 0$, and use
  that choice to thereby choose a power series $P_f$ for each $f$, 
  then $P = \sum_f P_f$ as power series (refining the fact that
  $r = \sum_f r_f$ as rational functions).
\end{Lemma}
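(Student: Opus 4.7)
The plan is to exhibit a common formal power series ring in which both $P$ and each $P_f$ live, and then use that this ring injects into its fraction field to deduce $P=\sum_f P_f$ from the identity $r=\sum_f r_f$ of rational functions. First I would analyse the support of each $P_f$: after the flip, every factor $\frac{1}{1-t^{-\beta}}$ with $\langle\tau,\beta\rangle>0$ expands uniquely as the geometric series $\sum_{n\geq 0} t^{-n\beta}$, whose support sits on the ray $\naturals(-\beta)$ on which $\tau$ is strictly negative. Hence $P_f$ is supported in a translate of the pointed rational cone generated by the various $-\beta_i$ coming from $r_f$'s denominator, with $\tau$ proper and bounded above on that cone. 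Because only finitely many $f$ appear, and the support of $P$ itself satisfies the same hypothesis by assumption, I can choose a single pointed rational polyhedral cone $C$ containing all these supports simultaneously, still with $\tau$ proper and bounded above on $C$.

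Next I would let $R_C$ be the space of formal sums $\sum_{v\in C\cap L} c_v t^v$ indexed by lattice points of a pointed cone $C$ (here $L$ is the ambient weight lattice). Because $C$ is pointed, the convolution product is well-defined (each output coefficient is a finite sum), so $R_C$ is a commutative ring containing the group algebra $\complexes[L]$. Using $\tau$, or a generic lex refinement of it if $\tau$ fails to separate lattice points, one assigns to every nonzero element of $R_C$ a well-defined top term; since top terms multiply as expected, $R_C$ is an integral domain, hence injects into its fraction field. Moreover the geometric series expansion realises each $\frac{1}{1-t^{-\beta}}$ with $\langle\tau,\beta\rangle>0$ as an honest element of $R_C$, so the flip-and-expand prescription for $P_f$ is exactly the realisation of $r_f$ as an element of $R_C$.

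With this in place, both $P$ and $\sum_f P_f$ lie in $R_C$ and both map to $r=\sum_f r_f$ in the fraction field of $R_C$: $P$ by hypothesis, and $\sum_f P_f$ because each $P_f$ was constructed to be the $R_C$-image of $r_f$. Injectivity of $R_C$ into its fraction field then gives $P=\sum_f P_f$ in $R_C$, i.e.\ as formal power series, which is exactly the claim. The step I expect to require the most care is the choice of the common cone $C$: the $-\beta_i$ coming from different $f$'s could a priori point in unrelated directions, but the flip guarantees every $\beta_i$ lies in the open half-space $\{\langle\tau,\cdot\rangle>0\}$, so all the $-\beta_i$ land in $\{\langle\tau,\cdot\rangle<0\}$, and their convex hull together with $\mathrm{supp}(P)$ still fits in a pointed cone on which $\tau$ is proper and bounded above. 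Once this small geometric observation is secured, the rest is formal manipulation inside $R_C$.
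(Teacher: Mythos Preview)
Your argument is correct, and it takes a different route from the paper. The paper perturbs $\tau$ to be integral and uses it to view $P$ as a one-variable Laurent series in $s$ (recording the $\tau$-level) whose coefficients are Laurent \emph{polynomials} in the transverse variables, since properness makes each level set finite; then one-variable uniqueness of the Laurent expansion of $r$ (over the field of transverse rational functions) forces $P=\sum_f P_f$ level by level. You instead work directly in a multivariable formal-series ring $R_C$, show it is an integral domain via a leading-term argument along $\tau$, and conclude from injectivity into the fraction field. Both approaches rest on the same underlying fact---$\tau$ supplies a filtration with finite graded pieces---but your packaging is more structural and reusable, while the paper's reduction-to-one-variable is quicker and more hands-on.

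One small technical point to tighten in your version: the numerators of the $r_f$, and possibly the apex of the cone carrying $\mathrm{supp}(P)$, may involve monomials with $\langle\tau,\cdot\rangle>0$, so a pointed cone $C$ with vertex at the origin (on which necessarily $\tau\leq 0$) will not literally contain all the supports. The standard fix is to let $R_C$ consist of series supported in finite unions of translates of a fixed pointed cone, equivalently series whose support meets each half-space $\{\langle\tau,\cdot\rangle\geq c\}$ in a finite set; this ring still contains $\complexes[L]$, $P$, and every $P_f$, is a domain by the same leading-term argument, and your proof goes through unchanged.
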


\begin{proof}
  In the one-variable case, this just says that a rational function
  (with this sort of denominator) has a unique Laurent series expansion. 
  In a quite different situation, two different finitely supported
  multivariable Laurent ``series'' can't have the same associated
  rational function (the Laurent polynomial).

  Now the general case.
  Since the support is in a proper cone we can perturb $\tau$ to be rational, 
  then scale it to be integral. Thinking of $P$ by its coefficients
  so as a function on $\integers^d$, we can use $\langle \tau, \bullet\rangle$
  to marginalize $P$ to a power series $P'$ in one variable. By the
  first statement above, that $P'$ is unique. Then over each exponent of $P'$
  we have finitely many terms in $P$, and can adapt the second statement above.
\end{proof}

This question is often phrased as one of regularizing the Fourier transform 
to be able to apply to rational functions.
In the cases typically considered (e.g. in \cite{GLS}) the ``series'' $P$ is
finitely supported and $\tau$ is therefore arbitrary (once generic).
In the most familiar example, $P$ is the weight multiplicity function of a 
finite-dimensional $G$-irrep $V_\lambda$ and localization gives the manifestly 
Weyl-symmetric formula\footnote{We record here the conventions that lead to 
  this formula in this form. The positive roots $\Delta_+^G$ are those in 
  the Borel subgroup $B_G$. Around the basepoint of $G/B_G^-$, the isotropy 
  weights are $\Delta_-^G$, so the ring of functions on that tangent space 
  or big cell has the character $1/Den(G)$, which we then twist by $t^\lambda$.
  Then the other summands are all Weyl translates of this one.
  When flipping to the standard Weyl character formula, we want
  each of the $1-t^{-\beta}$ factors in the denominator to have 
  $\beta \in \Delta_+^G$.
  }
$\sum_{w\in W_G} w\cdot \frac{t^\lambda}{Den(G)}$, 
where $Den(G) = \prod_{\beta\in \Delta^G_+} (1-t^{-\beta})$ is the Weyl denominator
and $\Delta^G_+$ is determined from $\tau$.
In this case all denominators flip to the same $Den(G)$, giving the 
Weyl character formula and its Fourier transform the Kostant
multiplicity formula. But when $P$ is noncompactly supported we
will need to choose $\tau$ more carefully.

Nevertheless, start with $\tau$ a generic coweight of $T$,
and a finite-dimensional $T$-vector space $V$ with $V^T = 0$.
If all of $V$'s weights $wts(V)$ lie in a proper half-space
of $T$'s weight lattice,
then the $T$-weight multiplicities on the ring of functions on $V$ are finite, 
and its character is the rational function 
$\prod_{\mu\in wts(V)} \frac{1}{1-t^{-\mu}}$.
Define
\begin{eqnarray*}
  down(V)  &:=& \{ \mu \in wts(V) \ : \ \langle \tau, \mu\rangle < 0 \} \\
   ind(V)  &:=& \#down(V) \\
  \rho(V)  &:=& \sum down(V) \qquad
  \text{so $\langle \tau,\rho(V) \rangle < 0$, unless $down(V)=\emptyset$}\\
  wts(V)_+ &:=& \{ \mu\ \mathop{sign}(\langle \tau, \mu\rangle)\ :\ 
     \mu \in wts(V) \}
\end{eqnarray*}
where $wts(V)$ denotes the set-with-multiplicity of $T$-weights.
Call these together the \defn{$\tau$-positivity setup,} and use them to rewrite
the character of the ring of functions on $V$
\begin{eqnarray*}
  \prod_{\mu\in wts(V)} \frac{1}{1-t^{-\mu}}
  &=& 
  \prod_{\mu\in wts(V),\ \langle \mu,\tau \rangle < 0} \frac{1}{1-t^{-\mu}} 
  \prod_{\mu\in wts(V),\ \langle \mu,\tau \rangle > 0} \frac{1}{1-t^{-\mu}} \\
  &=& 
  \prod_{\mu\in wts(V),\ \langle \mu,\tau \rangle < 0} \frac{-t^\mu}{1-t^{\mu}}
  \prod_{\mu\in wts(V),\ \langle \mu,\tau \rangle > 0} \frac{1}{1-t^{-\mu}} \\
  &=& 
  (-1)^{ind(V)}\ t^{\rho(V)}
  \prod_{\mu\in wts(V),\ \langle \mu,\tau \rangle < 0} \frac{1}{1-t^{\mu}}
  \prod_{\mu\in wts(V),\ \langle \mu,\tau \rangle > 0} \frac{1}{1-t^{-\mu}} \\
  &=&
  (-1)^{ind(V)}\ t^{\rho(V)}
  \prod_{\mu\in wts_+(V)} \frac{1}{1-t^{-\mu}}.
\end{eqnarray*}
Let 
$$ \kappa_m(\vec v; S) := \# \left\{ \text{ways to write $\vec v$ as a 
  sum of exactly $m$ vectors from $S$, with multiplicity} \right\} $$
be a sort of graded vector partition function on a set-with-multiplicity $S$,
with
$$ \kappa(\vec v; S) = \sum_{m\in \naturals} \kappa_m(\vec v; S) $$
the usual one. 

\begin{Proposition}\label{prop:FT}
  From the $T$-coweight $\tau$ build a $(T\times \complexes^\times)$-coweight
  $(\tau, N \ll 0)$. With this choice, we can Fourier transform the term 
  from proposition \ref{prop:contribution} 
  to the virtual weight multiplicity function
  \begin{eqnarray*}
    (\lambda,d) \ \mapsto\ (-1)^{ind(T_f X)} \ 
    &&\kappa\bigg( (\lambda - wt(\calL|_f) - \rho(T_f X)
    - \sum_{\mu \in wts(T_f Y/T_f X)} \mu,\ d - \codim_Y X);\\
      && \{ (-\mu,0) : \mu\in wts_+(T_f X) \} \coprod
      \{ (\mu,1) : \mu\in wts(T_f Y/T_f X) \}     \bigg)
  \end{eqnarray*}
  and the $(T\times \complexes^\times)$-weights occurring in 
  $\chi(T^* Y;\ \calL \tensor gr\ D_X)$ lie in a cone
  on which the coweight $(\tau,N)$ of $T\times\complexes^\times$
  is proper and bounded above.
\end{Proposition}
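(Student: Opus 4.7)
\emph{Proof plan.} The strategy is to massage each localized contribution from Proposition \ref{prop:contribution} into a product of factors of the form $\frac{1}{1-t^{-\beta}}$ with $\langle (\tau, N),\beta\rangle > 0$, expand each such factor as a geometric series, and read off the coefficient of $t^\lambda z^d$ as a vector partition number; the cone hypothesis of Lemma \ref{lem:recoverP} then drops out from the positivity of these $\beta$'s.

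First I would apply the four-line $\tau$-positivity rewrite displayed just above the proposition to the tangent factor $\prod_{\mu \in wts(T_f X)}\frac{1}{1-t^{-\mu}}$, producing the sign $(-1)^{ind(T_f X)}$, the monomial shift $t^{\rho(T_f X)}$, and a product $\prod_{\mu \in wts_+(T_f X)}\frac{1}{1-t^{-\mu}}$ whose denominator exponents $\mu$ all pair positively with $\tau$. Then I would rewrite each cotangent factor $\frac{t^\mu z}{1-t^\mu z}$ by pulling $t^\mu z$ up to the numerator, which contributes an extra joint shift by $\bigl(\sum_{\mu\in wts(T_f Y/T_f X)}\mu,\ \codim_Y X\bigr)$. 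The remaining denominator $1-t^\mu z$ has $(T\times\C^\times)$-weight $\beta=(-\mu,-1)$, whose pairing with $(\tau,N)$ is $-\langle\tau,\mu\rangle - N$; this is positive as soon as $N$ is chosen below $-\max_{f,\mu}\langle\tau,\mu\rangle$, a finite maximum because $X^{T_K}\subseteq Y^{T_K}$ is finite and each tangent space is finite-dimensional. Hence with a single uniform $N\ll 0$ no flipping is ever needed on the cotangent side.

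Expanding every surviving $\frac{1}{1-t^{-\beta}}$ as a geometric series and multiplying out, the coefficient of $t^\lambda z^d$ becomes visibly the number of ways to write the shifted exponent
$$\bigl(\lambda - wt(\calL|_f) - \rho(T_f X) - \sum_{\mu\in wts(T_f Y/T_f X)}\mu,\ d-\codim_Y X\bigr)$$
as a non-negative integer combination (with multiplicity) of the set
$\{(-\mu,0):\mu\in wts_+(T_f X)\}\cup\{(\mu,1):\mu\in wts(T_f Y/T_f X)\}$,
i.e.\ the $\kappa$-function in the statement; the sign $(-1)^{ind(T_f X)}$ is carried through unchanged. For the cone claim, every generator in this set pairs strictly negatively with $(\tau,N)$ by construction, so the cone they span is pointed and $(\tau,N)$ is linearly negative and thus bounded above by $0$, and proper on it by the standard argument (the functional attains a strictly negative maximum on the compact intersection of the cone with the unit sphere). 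The finitely many $f$'s give finitely many monomial-translated copies of such a cone, fitting inside a single larger cone with the same properties, which is exactly what Lemma \ref{lem:recoverP} needs in order to glue the local partition-function contributions into the global weight multiplicity.

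The main obstacle is not conceptual but organizational: one has to choose a single $(\tau,N)$ that simultaneously makes every generator from every fixed point $f$ negative, so that a uniform power-series expansion is being used everywhere. This is precisely why the $\C^\times$-direction and the specification ``$N\ll 0$'' are introduced — without that extra coordinate the cotangent weights $(\mu,1)$ would lie on both sides of the hyperplane $\tau=0$, and no single planar $\tau$ could flip them all into a common half-space. Once $N$ is chosen sufficiently negative (a finite condition), everything else reduces to tracking monomial shifts through geometric series expansions.
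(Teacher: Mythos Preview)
Your proposal is correct and follows essentially the same approach as the paper: apply the $\tau$-positivity rewrite to the tangent product, factor out the numerators $t^\mu z$ from the cotangent product, and read off the resulting geometric-series expansion as a vector partition function. You supply more detail than the paper on two points it leaves implicit --- the explicit finite bound on $N$ guaranteeing no cotangent flips, and the argument that the finitely many translated cones fit into a single proper cone for Lemma~\ref{lem:recoverP} --- but the structure is the same.
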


\begin{proof}
  The benefit of taking $N\ll 0$ in the coweight $(\tau,N)$ 
  is that the $z$ factors in the second product ensure that
  none of them need be flipped to satisfy lemma \ref{lem:recoverP}.
  The first product we rewrite as explained above:
  \begin{eqnarray*}
  \prod_{\mu \in wts(T_f X)} \frac{1}{1 - t^{-\mu}}
  &=& (-1)^{ind(T_f X)}\ t^{\rho(T_f X)} 
  \prod_{\mu \in wts_+(T_f X)} \frac{1}{1 - t^{-\mu}}
  \end{eqnarray*}
  so the rational function contributed by proposition \ref{prop:contribution} 
  can be rewritten (with $(\tau,N)$-positive denominators) as
  \begin{eqnarray*}
    &&
    (-1)^{ind(T_f X)}\ t^{wt(\calL|_f) + \rho(T_f X)}
    \prod_{\mu \in wts_+(T_f X)} \frac{1}{1 - t^{-\mu}}
    \prod_{\mu \in wts(T_f Y/T_f X)} \frac{t^{\mu} z}{1 - t^\mu z} \\
    &=&
    (-1)^{ind(T_f X)}\ t^{wt(\calL|_f) + \rho(T_f X) + \sum_{\mu \in wts(T_f Y/T_f X)} \mu}
    \ z^{\codim_Y X}
    \prod_{\mu \in wts_+(T_f X)} \frac{1}{1 - t^{-\mu}}
    \prod_{\mu \in wts(T_f Y/T_f X)} \frac{1}{1 - t^\mu z} 
  \end{eqnarray*}
  whose imputed Fourier transform is the formula claimed.
\end{proof}

Note that if we try to forget the $\complexes^\times$-action in the
above, we run into a vector partition function on the set
$ \{ -\mu : \mu\in wts_+(T_f X) \} \ \coprod\ wts(T_f Y/T_f X) $,
which may very well contain a vector and its negative,
making the vector partition function infinite. 
The retention of the $\complexes^\times$ therefore looks crucial,
but we will sometimes be able to get rid of it,
as in the end of theorem \ref{thm:Blattnerish}.

\subsection{A $K$-multiplicity formula for certain $\calD$-modules}

Some version of the following argument seems to be well-known to the
experts (see e.g.  \cite[p110--111]{BS}), at least for proving
Blattner's original conjecture, but we need it in greater generality
than we could find in the literature.

\begin{Theorem}\label{thm:geometry}
  Let $X \subseteq Y$ be a pair of smooth complex projective varieties,
  carrying compatible actions of a connected complex reductive group $K$. 
  Assume that $K$'s maximal torus $T_K$ acts on $Y$ (and hence $X$) 
  with isolated fixed points, and that all $K$-stabilizers are solvable.
  Fix a generic real dominant coweight $\tau$ of $K$,
  with which to define the positive roots $\Delta^K_+$ of $K$
  and a $\tau$-positivity setup on each $T_f X$. 
  Then $wts(T_f X)_+ \supseteq \Delta_+^K$ for each $f$.

  Let $D_X$ be the sheaf of distributions on $Y$ supported on $X$, and
  $\calL$ a $K$-equivariant line bundle on $Y$.  Then for $\lambda$ a
  dominant $K$-weight, the multiplicity of the finite-dimensional
  $K$-irrep $V_\lambda$ in the $\complexes^\times$-weight $d$ part of
  $\chi(T^* Y;\ gr\ D_X \otimes \calL)$ is
  \begin{eqnarray*}
    &&  \sum_{f\in X^T}
    (-1)^{ind(T_f X)} 
    \kappa\bigg(
     \big(\lambda -wt(\calL|_f) -\rho(T_f X) -\sum_{wts(T_f Y / T_f X)} \mu,\quad
     d - \codim_Y X\big)\ ; \\ 
   && \qquad\qquad\qquad
   \{ (-\mu,0) : \mu\in wts_+(T_f X) \setminus \Delta^K_+ \} \ \coprod\
      \{ (\mu,1) : \mu\in wts(T_f Y/T_f X) \}     \bigg).
  \end{eqnarray*}
  This is $0$ for $d < \codim_Y X$, and nonnegative for $\calL$ ample enough.
\end{Theorem}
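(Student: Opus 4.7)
The plan is to combine the $T\times\complexes^\times$-weight multiplicity formula from Proposition~\ref{prop:FT} with the identity
\[
\dim\mathrm{Hom}_K(V_\lambda,V) \;=\; [t^\lambda]\bigl(\mathrm{ch}_T(V)\cdot Den(K)\bigr),\qquad Den(K)=\prod_{\alpha\in\Delta_+^K}(1-t^{-\alpha})
\]
(a rearrangement of the Weyl character formula, using that $\chi_\mu\cdot Den(K)=\sum_w(-1)^{\ell(w)}t^{w(\mu+\rho_K)-\rho_K}$ has $t^\lambda$-coefficient $\delta_{\mu,\lambda}$ for dominant $\mu,\lambda$), and to exploit the inclusion $\Delta_+^K\subseteq wts_+(T_f X)$ to cancel the $\Delta_+^K$ factors against denominator factors already present in each fixed-point contribution.

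First I would establish the inclusion. Since every $K$-stabilizer is solvable, $\mathrm{Stab}_K(f)$ lies in a Borel $B_K^-\leq K$, which I choose so that $\Delta_+^K$ is the $\tau$-positive root system and $B_K^-$ has roots $\Delta_-^K$. The $K$-equivariant quotient map $K\cdot f = K/\mathrm{Stab}_K(f) \onto K/B_K^-$ induces a surjection of tangent spaces at $f$, so $T_f X \supseteq T_f(K\cdot f)$ contains all the $\Delta_+^K$ weights (namely the weights of $\lie{k}/\lie{b}_K^-$ at the basepoint of $K/B_K^-$). These weights are already $\tau$-positive, so no sign flip is required and they land in $wts_+(T_f X)$.

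Next I would multiply the rational function of Proposition~\ref{prop:contribution} by $Den(K)$. At each fixed point $f$, the rewriting in the proof of Proposition~\ref{prop:FT} produces a factor $\prod_{\mu\in wts_+(T_f X)}1/(1-t^{-\mu})$ in the denominator, and the numerator factors $(1-t^{-\alpha})$ of $Den(K)$, one for each $\alpha\in\Delta_+^K\subseteq wts_+(T_f X)$, cancel exactly those denominator factors. What remains is $\prod_{\mu\in wts_+(T_f X)\setminus\Delta_+^K}1/(1-t^{-\mu})\cdot\prod_{\mu\in wts(T_f Y/T_f X)}1/(1-t^\mu z)$, still $(\tau,N)$-positive for $N\ll 0$, so lemma~\ref{lem:recoverP} applies and identifies the power series with a vector partition function shifted by $wt(\calL|_f)+\rho(T_f X)+\sum_{\mu\in wts(T_f Y/T_f X)}\mu$. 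Summing over $f\in X^T$ and extracting the coefficient of $t^\lambda z^d$ yields the stated formula.

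The vanishing for $d<\codim_Y X$ is immediate: every summand vector in the partition function has second coordinate $0$ or $1$, so the target second coordinate $d-\codim_Y X<0$ cannot be reached. For nonnegativity when $\calL$ is ample enough, I would push $gr\ D_X\otimes\pi^*\calL$ forward along the affine projection $T^*Y\onto Y$ to get a $\complexes^\times$-graded coherent sheaf on $Y$ whose graded pieces are coherent on $Y$ and supported on $X$; by Serre vanishing, for $\calL$ ample enough the higher cohomology of each graded piece vanishes, so the $\complexes^\times$-weight-$d$ part of $\chi$ is a genuine $K$-representation and its $V_\lambda$-multiplicity is nonnegative. The hardest step is this geometric vanishing on the noncompact $T^*Y$, since one must work degree-by-degree in $\complexes^\times$ and ``ample enough'' can a priori depend on the $\complexes^\times$-degree and on the weight $\lambda$ under consideration.
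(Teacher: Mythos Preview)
Your argument is essentially the paper's: localize via Propositions~\ref{prop:contribution} and~\ref{prop:FT}, use solvable stabilizers to get $\Delta_+^K \subseteq wts_+(T_f X)$, cancel those denominator factors against $Den(K)$ (equivalently, apply the difference operators that invert the Kostant multiplicity formula) to pass from $T_K$- to $K$-multiplicities, and invoke ampleness for positivity. One slip in your inclusion step: you cannot in general arrange that $\mathrm{Stab}_K(f)$ lies in the \emph{specific} $\tau$-negative Borel $B_K^-$ for every fixed point $f$ --- different fixed points have stabilizers sitting in different Borels of $K$, so your surjection $K/\mathrm{Stab}_K(f)\onto K/B_K^-$ need not exist as stated. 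The paper's version sidesteps this: since $\mathrm{stab}_{\lie k}(f)\supseteq\lie t_K$ is solvable it cannot contain both $\lie k_\beta$ and $\lie k_{-\beta}$, so $wts(T_f X)\supseteq wts(\lie k/\mathrm{stab}_{\lie k}(f))$ contains at least one of $\pm\beta$ for each root $\beta$, and the $\tau$-flip then forces $\Delta_+^K\subseteq wts_+(T_f X)$. With that correction your proof goes through, and your Serre-vanishing elaboration of the final positivity claim is more explicit than the paper's one-line appeal to $\chi=\dim H^0$.
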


Sanity check (Borel-Weil): $X = Y = K/B_K^-$, 
with $\calL$ the $\mu$ Borel-Weil line bundle (weight $\mu$ over the basepoint).
Then $T_f Y/T_f X = 0$, and each $wts_+(T_fX) = \Delta^K_+$, 
so the vector partition function becomes a Kronecker delta. 
Also, $X^T \iso W_K$, with $ind(T_w X) = \ell(w)$, 
$wt(\calL|_w) = w \cdot \lambda$, and $\rho(T_w X) = w\cdot\rho_K - \rho_K$.
The result (summed over $d$, vanishing unless $d=0$) is
\begin{eqnarray*}
  \sum_{d\in\naturals} \sum_{w\in W_K} (-1)^{\ell(w)} \kappa\left(
    (\lambda - w\cdot \mu + \rho_K - w\cdot \rho_K, d); \ \emptyset\right)
  &=& \sum_{w\in W_K} (-1)^{\ell(w)}\ [\lambda + \rho_K = w\cdot(\mu + \rho_K)] \\
  &=& [\lambda + \rho_K = \mu + \rho_K] \qquad = [\lambda = \mu] 
\end{eqnarray*}
as expected.

\begin{proof}
  Now that $X$ is smooth, the localization formula simplifies to
  proposition \ref{prop:contribution} at {\em every} $f\in X^T$,
  and by lemma \ref{lem:recoverP} we obtain the 
  $(T_K\times\complexes^\times)$-multiplicities when we sum up the
  virtual multiplicity functions from proposition \ref{prop:FT}:
  \begin{eqnarray*}
    \sum_{f\in X^T} (-1)^{ind(T_f X)} \ 
    &&\kappa\bigg( (\lambda - wt(\calL|_f) - \rho(T_f X)
    - \sum_{\mu \in wts(T_f Y/T_f X)} \mu,\ d - \codim_Y X);\\
      && \{ (-\mu,0) : \mu\in wts_+(T_f X) \} \ \coprod\
      \{ (\mu,1) : \mu\in wts(T_f Y/T_f X) \}     \bigg).
  \end{eqnarray*}
  
  Note that $\lie{k}/\lie{stab_{\lie{k}}(x)} \into T_f X$
  as $T_K$-representations, so by the assumption of solvable stabilizers, 
  $wts(T_f X)$ must contain at least one of $\beta,-\beta$ 
  for each positive root $\beta\in \Delta^K_+$.
  Hence $wts(T_f X)_+ \supseteq \Delta^K_+$.

  To go from the $(T_K\times \complexes^\times)$-multiplicities
  to the $(K\times \complexes^\times)$-multiplicities,
  we have to undo the Kostant multiplicity formula, 
  which amounts to applying difference operators in the directions
  of $K$'s positive roots. The effect is to remove those vectors
  from the input to the vector partition function, giving the claimed formula.

  On the positive Weyl chamber, that formula computes
  the $(K \times \complexes^\times)$-multplicities
  in $\chi(T^* Y;\ gr\ D_X \tensor \calL)$. (Outside the
  positive Weyl chamber it is Weyl-antisymmetric around $-\rho_K$,
  secretly computing the Euler characteristic of the $\lie{n}_K$-cohomology.)
  When $\calL$ is ample enough, this $\chi = \dim H^0$, hence is nonnegative.  
\end{proof}

\subsection{Closed $K$-orbits}

Hereafter $G$ is a complex connected reductive group, with $K$ the
identity component of the fixed points of a holomorphic involution $\theta$.
The ambient space\footnote{As indicated in the previous footnote, this
  convention leads to the cleanest localization formula.}
$Y = G/B_G^-$ is a flag manifold, 
and $X\subseteq Y$ is a $K$-orbit closure on it.
Pick $T_K \leq K$ then $T_G \geq T_K$, so we have a restriction map
$ T_G^* \onto T_K^* $
that we won't require be an isomorphism.
The assumptions on fixed points and stabilizers hold in this case, 
but a $K$-orbit closure $X$ will usually be singular. 
If $X$ is a {\em closed} orbit then it is smooth (but not vice versa).

To avoid cluttering the notation (and since we essentially never use 
$T_G$-weights, only $T_K$-weights) we won't write in the restriction map;
when mixing $T_G$-weights and $T_K$-weights 
(e.g. in $\Delta_+^G \setminus \Delta_+^K$) assume that the restriction
map has been applied to the $T_G$-weights.
Of course, this may mean that $\Delta_+^G \setminus \Delta_+^K$
refers to a set-with-multiplicity.

\begin{Theorem}\label{thm:Blattnerish}
  If $X$ is a closed $K$-orbit on $G/B_G^-$, and $\calL$ is the 
  line bundle on $G/B_G^-$ corresponding to a dominant $G$-weight $\nu$,
  then the multiplicity of the $K$-irrep $V_\lambda$ in
  $\chi(G/B_G^-;\ D_X \otimes \calL)$ is
  \begin{eqnarray*}
    && \sum_{w\in W_K}     (-1)^{\ell(w)}  \kappa\bigg(
    (\lambda +\rho_K) - w\cdot\big(\nu + 2\rho_G - \rho_K\big)
    \ ; \    w\cdot( \Delta^G_+ \setminus \Delta^K_+)
    \bigg) \\
    &=&
    \sum_{w\in W_K}     (-1)^{\ell(w)}  \kappa\bigg(
    w\cdot(\lambda +\rho_K) - \big(\nu + 2\rho_G - \rho_K\big)
    \ ; \     \Delta^G_+ \setminus \Delta^K_+
    \bigg).
  \end{eqnarray*}
  For $\nu$ sufficiently dominant, 
  if we refine $\kappa()$ to $\sum_d \kappa_d()$,
  then each $\kappa_d()$ is individually positive.
\end{Theorem}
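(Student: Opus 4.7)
The plan is to specialize Theorem \ref{thm:geometry} to the case of a closed $K$-orbit and simplify. Since $X$ is closed it is smooth, and can be written as $X \iso K/B_K^-$ with $B_K^- := K\cap B_G^-$, so I would pick a base point $f$ such that $T_f X \iso \lie{k}/\lie{b}_K^-$ has weights $\Delta_+^K$ and $T_f Y \iso \lie{g}/\lie{b}_G^-$ has (restricted) weights $\Delta_+^G$. The fixed-point set $X^{T_K}$ is the $W_K$-orbit of $f$, and the weight data at $wf$ are Weyl translates of those at $f$.

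The key simplification is that since $\tau$ is dominant, any $\alpha\in\Delta^K$ satisfies $\alpha\cdot\mathrm{sign}(\langle\tau,\alpha\rangle)\in\Delta_+^K$, so $wts_+(T_{wf}X) = \Delta_+^K$ for every $w\in W_K$. Hence the set $wts_+(T_{wf}X)\setminus\Delta_+^K$ in Theorem \ref{thm:geometry} is empty at every fixed point, which kills all contributions from the cotangent factor of $\kappa$. I would then tabulate the remaining data: $ind(T_{wf}X) = \ell(w)$; $\rho(T_{wf}X) = w\rho_K - \rho_K$ by the standard Weyl-translate-of-$\rho$ identity; $wt(\calL|_{wf}) = w\nu$; and $\sum_{\mu\in wts(T_{wf}Y/T_{wf}X)}\mu = w(2\rho_G - 2\rho_K)$. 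Substituting and collecting $w$-linear terms turns the argument of $\kappa$ into $(\lambda+\rho_K) - w(\nu + 2\rho_G - \rho_K)$, giving the first displayed formula. The second follows from the Weyl-equivariance $\kappa(v;wS) = \kappa(w^{-1}v;S)$ of the vector partition function together with reindexing $w\mapsto w^{-1}$, which preserves $(-1)^{\ell(w)}$.

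For the positivity refinement, the nonnegativity statement at the end of Theorem \ref{thm:geometry} is already graded by the $\complexes^\times$-weight $d$, so I would just unpack it: the pushforward of $gr\ D_X$ along the affine projection $\pi: T^*Y \onto Y$ decomposes into a direct sum of coherent $K$-equivariant sheaves on $Y$ indexed by $\complexes^\times$-weight, and standard Serre vanishing makes each one's higher cohomology vanish for $\nu$ sufficiently dominant. Then each graded piece of $\chi$ equals $H^0$ in that degree, which is a genuine $K$-representation, and its $V_\lambda$-multiplicity is exactly the $\kappa_d$-term of the formula.

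The main obstacle is the bookkeeping of the $\tau$-positivity data at each Weyl-translated fixed point, particularly the verification that $wts_+(T_{wf}X)\setminus\Delta_+^K = \emptyset$ and the clean identification of $\rho(T_{wf}X)$ as $w\rho_K - \rho_K$; once those are in place, the rest is routine substitution, Weyl-invariance manipulations, and a standard cohomology vanishing.
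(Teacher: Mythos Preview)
Your proposal is correct and follows essentially the same approach as the paper: specialize Theorem~\ref{thm:geometry} using the key observation that $wts_+(T_{wf}X) = \Delta_+^K$ at every fixed point, tabulate the same Weyl-translated data ($ind$, $\rho$, $wt(\calL)$, normal-weight sum), and then reindex via $\kappa(v;wS) = \kappa(w^{-1}v;S)$ and $w\mapsto w^{-1}$. The one small point you gloss over that the paper makes explicit is why the ungraded $\kappa = \sum_d \kappa_d$ is finite in the first place: since $T_K$ contains $G$-regular elements, the restricted weights $\Delta_+^G \setminus \Delta_+^K$ lie in a proper half-space of $T_K^*$, so $\kappa_d$ vanishes for $d$ large and the sum over $d$ terminates.
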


\junk{
Sanity check (Borel-Weil) redux: let $K=G$. Then this becomes
\begin{eqnarray*}
  \sum_{w\in W_G} (-1)^{\ell(w)} \kappa\left(
    w\cdot(\lambda +\rho_G) - \left(\nu + \rho_G\right)
    \ ;\ \emptyset \right)
  &=&  [\lambda + \rho_G = \nu + \rho_G]    
\end{eqnarray*}
}

In the equal-rank case, this is Blattner's conjecture
for integral infinitesimal character.
It is off by a $\rho_G$ from the usual formula because it is stated in
terms of the line bundle rather than the infinitesimal character.

\junk{

We cite the Blattner formula (which requires $T_K = T_G$) for comparison:
$$ \sum_{w\in W_K} (-1)^{\ell(w)}
\kappa\bigg( w\cdot(\lambda + \rho_K) -(\nu+\rho_G- \rho_K)\ ; \
\Delta^G_+ \setminus \Delta^K_+ \bigg).
$$
and check it for $G=K$:
\begin{eqnarray*}
  \sum_{w\in W_K} (-1)^{\ell(w)}
  \kappa\bigg( w\cdot\lambda -\nu + w\cdot\rho_G\ ; \
  \emptyset \bigg) 
  &=& \sum_{w\in W_G} (-1)^{\ell(w)} [w\cdot(\lambda+\rho_G) = \nu] \\
  &=& [\lambda+\rho_G = \nu] \\
\end{eqnarray*}

}

\begin{proof}
  For this we take $\tau$ a generic dominant coweight of $G$,
  which by restriction (and our choice $T_G \geq T_K$) gives a
  coweight of $K$ as well, defining $\Delta_+^G$ and $\Delta_+^G$.
  The $wt$ and $wts$ notation hereafter refer to $T_K$-weights.

  For this pair $(X,Y)$ every $wts(T_f X)_+ = \Delta^K_+$,
  so theorem \ref{thm:geometry} becomes
  \begin{eqnarray*}
    && \sum_{f\in X^T} (-1)^{ind(T_f X)} \kappa\bigg(
    \big(\lambda -wt(\calL|_f) -\rho(T_f X) -\sum_{wts(T_f Y / T_f X)} \mu,\ 
    d -\codim_Y X\big);
    \quad wts(T_f Y/T_f X) \times \{1\}     \bigg) \\
    &=& \sum_{f\in X^T} (-1)^{ind(T_f X)} \kappa_{d -\codim_Y X}\bigg(
    \lambda -wt(\calL|_f) -\rho(T_f X) -\sum_{wts(T_f Y / T_f X)} \mu;
    \quad wts(T_f Y/T_f X)      \bigg).
  \end{eqnarray*}
  Now notice that
  \begin{itemize}
  \item $X^T = W_K \cdot B_G^-/B_G^-,$ so take $f = w B_G^-/B_G^-$ hereafter;
  \item $ind(T_f X) = \ell(w)$ for $w\in W_K$,
  \item $\rho(T_f X) = w\cdot\rho_K - \rho_K$, and
  \item $\sum_{wts(T_f Y / T_f X)} \mu = w\cdot 2(\rho_G - \rho_K)$
    as $T_K$-weights,
  \end{itemize}
  so we can rewrite $\kappa$'s first argument thusly
  \begin{eqnarray*}
    \lambda -wt(\calL|_f) -\rho(T_f X) -\sum_{wts(T_f Y / T_f X)} \mu
    &=& \lambda -w\cdot \nu-(w\cdot\rho_K -\rho_K) -2w\cdot(\rho_G - \rho_K)\\
    &=& (\lambda +\rho_K) -w\cdot \left(\nu + 2\rho_G - \rho_K\right)
  \end{eqnarray*}
  and the $m$th term in the sum becomes
  \begin{eqnarray*}
    && \sum_{w\in W_K}     (-1)^{\ell(w)} \kappa_{d-|\Delta_-^G \setminus \Delta_-^K|}\bigg(
    (\lambda +\rho_K) -w\cdot \left(\nu + 2\rho_G - \rho_K\right)
    \ ; \     w \cdot (\Delta^G_+ \setminus \Delta^K_+)
    \bigg) \\
    &=& \sum_{w\in W_K}     (-1)^{\ell(w)} \kappa_{d-|\Delta_-^G \setminus \Delta_-^K|}\bigg(
    w^{-1}\cdot(\lambda +\rho_K) - \left(\nu + 2\rho_G - \rho_K\right)
    \ ; \     \Delta^G_+ \setminus \Delta^K_+
    \bigg) \\
    &=& \sum_{w\in W_K}     (-1)^{\ell(w)}  \kappa_{d-|\Delta_-^K \setminus \Delta_-^K|}\bigg(
    w\cdot(\lambda +\rho_K) - \left(\nu + 2\rho_G - \rho_K\right)
    \ ; \     \Delta^G_+ \setminus \Delta^K_+
    \bigg) \qquad \text{by reindexing.}
  \end{eqnarray*}
  Since $T_K$ contains regular elements of $G$, 
  the set $\Delta^G_+ \setminus \Delta^K_+$ lives in a proper half-space,
  so for $d \gg 0$ this vector partition function must vanish.
  Hence we can sum over $d$ (effectively forgetting the
  $\complexes^\times$-action), obtaining the finite $K$-multiplicities
  claimed.
\end{proof}

\junk{
  To draw pictures of examples, it can be helpful to reinterpret the formula
  as follows.
  Let $c(\vec v;\ S) : \naturals^{S} \to T_K^*$ be the affine-linear
  function taking $f \mapsto \vec v + \sum_{\vec s \in S} f(\vec s) \vec s$.
  Then the pushforward $c(\vec v;\ S)_*(1)$ of counting measure on
  $\naturals^S$ is the function $\kappa(\lambda - \vec v;\ S)$,
  whose support splays out from $\vec v$ in the $S$ directions.
  (We won't use this result in this paper.)
  
  \begin{Corollary}\label{cor:cones}
    The $K$-multiplicity function from theorem \ref{thm:Blattnerish} is
    $$ \sum_{w\in W_K}     (-1)^{\ell(w)}\ c\bigg(
    w\cdot\big(\nu + 2\rho_G - \rho_K\big) - \rho_K
    \ ; \quad     w \cdot (\Delta^G_+ \setminus \Delta^K_+)
    \bigg)_* (1)
    $$
  \end{Corollary}

  \subsection{Example: $(G,K) = (GL_3,GL_2\times GL_1)$}
  
  Let $\theta$ be conjugation by $diag(1,1,-1)$. So
  \begin{itemize}
  \item $r$ is an isomorphism
  \item $W_K = S_2 \times S_1 \ \leq S_3$
  \item $\rho_K = (1,-1,0)/2$
  \item $\rho_G = (1,0,-1)$
  \end{itemize}
}

There is an extension of Blattner's conjecture in \cite[p376--377]{KV}
to the necessarily smooth preimages $\calO = \pi_P^{-1}(\calO_{\text{closed}})$
of closed $K$-orbits on $G/P$s.

\subsection{Smooth $K$-orbit closures}

\begin{Theorem}\label{thm:smoothorbit}
  Let $X$ be a smooth $K$-orbit closure on $G/B_G^-$,
  and $\calL$ be the line bundle corresponding to the
  dominant $G$-weight $\nu$.
  Then the multiplicity of the $K$-irrep $V_\lambda$ in the 
  $\complexes^\times$-weight $d$ part of $\chi(G/B_G^-;\ D_X \otimes \calL)$ is
  \begin{eqnarray*}
    && \sum_{w\in X^T}
    (-1)^{ind(T_w X)} 
    \kappa\bigg(
    \big(\lambda - w\cdot\nu -\rho(T_w X) -2w\cdot\rho_G + \sum_{wts(T_w X)} \mu,
    \quad  d - \codim_Y X\big)\ ; \\ 
    && \qquad\qquad\qquad
    \{ (-\mu,0) : \mu\in wts_+(T_w X) \setminus \Delta^K_+ \} \ \coprod\
    \{ (\mu,1) : \mu\in (w\cdot \Delta_+^G) \setminus wts(T_w X) \}     \bigg)
  \end{eqnarray*}
  where we are identifying $(G/B_G^-)^{T_K} \iso W_G$.
\end{Theorem}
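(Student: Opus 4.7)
The plan is to specialize Theorem~\ref{thm:geometry} to the pair $(X, Y = G/B_G^-)$ with $\tau$ a generic dominant coweight of $G$ restricted to $T_K$, and to reduce the resulting formula by computing the tangent data on the flag variety.

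First I would verify the hypotheses of Theorem~\ref{thm:geometry}. Smoothness of $X$ and $Y$ is given; the $K$-stabilizer of a point $gB_G^-$ equals $K \cap gB_G^- g^{-1}$, the intersection of $K$ with a Borel subgroup of $G$, hence solvable; and since $T_K$ contains elements that are $G$-regular, $(G/B_G^-)^{T_K} = (G/B_G^-)^{T_G} \iso W_G$, so the summation set $X^{T_K}$ is identified with a subset of $W_G$. Next I would record that at each $w \in X^{T_K}$ the tangent space $T_w(G/B_G^-)$ has $T_K$-weights $w \cdot \Delta_+^G$ (summing to $2w \cdot \rho_G$), and that $\calL$, attached to the dominant $G$-weight $\nu$, has fiber weight $w \cdot \nu$ at $w$. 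Writing $wts(T_w Y / T_w X) = (w \cdot \Delta_+^G) \setminus wts(T_w X)$ as multisets, this yields
$$\sum_{\mu \in wts(T_w Y / T_w X)} \mu \;=\; 2w \cdot \rho_G \;-\; \sum_{\mu \in wts(T_w X)} \mu.$$
Substituting into the formula of Theorem~\ref{thm:geometry}, the first argument of $\kappa$ collapses to
$$\lambda - w \cdot \nu - \rho(T_w X) - 2w \cdot \rho_G + \sum_{\mu \in wts(T_w X)} \mu,$$
and the generating multiset for $\kappa$ is exactly the one displayed in the statement.

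All of the real work has already been done in Theorem~\ref{thm:geometry} (equivariant localization, Fourier inversion via Lemma~\ref{lem:recoverP}, and the difference-operator reduction from $T_K$- to $K$-multiplicities). The only mildly subtle step here is bookkeeping: unlike in the closed-orbit Theorem~\ref{thm:Blattnerish}, for a general smooth $X$ there is no uniform description of $wts(T_w X)$ in terms of root data of $G$ and $K$, so the formula must be kept symbolic in $wts(T_w X)$ and $\rho(T_w X)$. For applications to a specific $X$ (as in \S\ref{sec:proofofmain}) these weights must be read off directly from the $T_K$-action on each tangent space.
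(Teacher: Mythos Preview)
Your specialization of Theorem~\ref{thm:geometry} is correct and matches the paper's ``easy conversion from the general case to $Y = G/B_G^-$'': the identification $(G/B_G^-)^{T_K}\iso W_G$, the fiber weight $w\cdot\nu$, the tangent weights $w\cdot\Delta_+^G$, and the resulting simplification of $\sum_{wts(T_wY/T_wX)}\mu$ to $2w\cdot\rho_G - \sum_{wts(T_wX)}\mu$ are exactly what the paper uses.

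However, you have omitted the one genuinely new ingredient in the paper's proof. The formula you derive is literally about $\chi(G/B_G^-;\ D_X\otimes\calL)$, but for the theorem to deserve its name (and to feed into \S\ref{sec:proofofmain}, where the quantities being computed are the $K$-types of the \emph{irreducible} $(\lie g,K)$-module attached to the orbit) one must know that $D_X$ is the Beilinson--Bernstein $\calD$-module corresponding to the irrep rather than, say, to the standard module or some intermediate subquotient. The paper supplies this: under Riemann--Hilbert, the irrep corresponds to the intermediate (perverse) extension of the local system from the open $K$-orbit to its closure; since $X$ is smooth, the constant sheaf on all of $X$ (shifted) is already perverse, hence \emph{is} the IC extension; translating back gives precisely the sheaf $D_X$ of distributions supported on $X$. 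Without this step the formula is correct but disconnected from the representation theory it is meant to address, and the applications in \S\ref{sec:proofofmain} would be unjustified.
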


\begin{proof}
  We need argue that we are using the correct $\calD_Y$-module
  with which to compute the irrep (and not e.g. the standard module,
  or something in between).
  Under the correspondence with perverse sheaves, we want to make
  sure that we're using the {\em perverse} extension to the $K$-orbit closure
  of the locally constant sheaf from the $K$-orbit. Since the $K$-orbit
  closure is smooth, the locally constant sheaf on the closure is already
  perverse, and as the perverse extension is unique we have found it.
  Converting back to $\calD_Y$-modules gives us the sheaf of 
  distributions on the orbit closure.

  The rest is easy conversion from the general case to $Y = G/B_G^-$, with
  some special features because $Y$'s tangent bundle is $W_G$-equivariant
  (or rather, $N(T_G)$-equivariant).
\end{proof}

\section{$(\lie{sl}_3,SO(3))$-irreps: 
  proof of theorem \ref{thm:main}}\label{sec:proofofmain}

\newcommand\onehalf{\frac{1}{2}}

We now apply theorem \ref{thm:smoothorbit} to the four $SO(3)$-orbit
closures on $SL(3)/B_{SL(3)}^-$. We index $SL(3)$'s dominant weights
by $\naturals^2$ using the fundamental weights, so $\rho_G = (1,1)$.
But when we index $SO(3)$'s (not $SL(2)$'s) dominant weights by $\naturals$,
we miss the $SL(2)$ weight $\rho_K$, which is at $\onehalf$
where the physicists (used to) like having it. In these coordinates, 
the restriction map $r: T_G^* \onto T_K^*$ is $(a,b) \mapsto a+b$ 
(taking the fundamental irreps of $SL(3)$ to the first, defining, 
irrep of $SO(3)$), and $r(\Delta^G_+) \setminus \Delta^K_+ = \{1,2\}$.

Let $\nu = (a,b) \in T_G^*$ as in the statement of theorem \ref{thm:main}.
The fixed points $Y^{T_K}$ are the coordinate flags, where the 
$SO(3)$-invariant inner product on $\complexes^3$ is the one where
the dual basis to $(e_1,e_2,e_3)$ is $(e_3,e_2,e_1)$. 

\subsection{The closed orbit $\calO_1 \cap \calO_2$}
In this case we can even apply theorem \ref{thm:Blattnerish}, obtaining
\begin{eqnarray*}
&& \sum_{w\in W_K}     (-1)^{\ell(w)}  \kappa\bigg(
(\lambda +\rho_K) - w\cdot\big(\nu + 2\rho_G - \rho_K\big)
\ ; \    w\cdot( \Delta^G_+ \setminus \Delta^K_+)
\bigg) \\
&=& \kappa\big(\lambda + \onehalf - (a+b + 4 - \onehalf)\ ; \   \{1,2\}\ \big) 
\quad-\quad
\kappa\big(\lambda + \onehalf + (a+b + 4 - \onehalf)\ ; \   \{-1,-2\}\ \big) \\
&=& \kappa\big(\lambda - 3 - (a+b)\ ; \     \{1,2\}\ \big) 
\quad-\quad
\kappa\big(\lambda + 4 + (a+b)\ ; \     \{-1,-2\} \ \big) \\
&=& \kappa\big(\lambda - 3 - (a+b)\ ; \     \{1,2\}\ \big) 
\qquad\qquad\qquad \text{since $\lambda,a,b\geq 0$}
\end{eqnarray*}
This vanishes for $\lambda < a+b+3$, and the values for 
$\lambda \geq a+b+3$ are $\lambda \mapsto \lfloor (\lambda-(a+b+3))/2 \rfloor$,
i.e. from $a+b$ they are $0,0,0,1,1,2,2,3,3,4,4,\ldots$
This easily matches the projection of
$$ C_{\calO_1\cap \calO_2} 
:= \{(c,d) \in \naturals^2 : c>a,\ d>b,\ c \equiv a \bmod 2\} 
\quad
\text{from theorem \ref{thm:main}.} $$

W. Schmid pointed out that the normal bundle to the closed orbit 
$\iso \mathbb{CP}^1$ is (by Grothendieck's theorem)
a sum of line bundles, specifically $\calO(-3)\oplus \calO(-3)$.
However Grothendieck's theorem fails $SO(3)$-equivariantly, 
even though it holds $\complexes^\times$-equivariantly \cite[\S 4.1]{KS}; 
rather this bundle has one $SO(3)$-invariant subbundle $\calO(-2)$ 
with quotient $\calO(-4)$. If we replace the normal bundle by
its associated graded bundle, the symmetric algebra of that is again 
a sum of line bundles (controlled by the vector partition function on
$\{1,2\}$), to which we can apply Borel-Weil, obtaining this
vector partition function formula very directly. Then the vanishing
of higher cohomology for the (symmetric algebra of that) associated graded
lets us infer that the normal bundle itself gives us the same $H^0$.

For uniformity with the other orbits to come we redo the calculation
using the formula from theorem \ref{thm:geometry} directly, which computes the
subspace of $\complexes^\times$-weight $d$:

\begin{eqnarray*}
  &&  \sum_{f\in X^T}
  (-1)^{ind(T_f X)} 
  \kappa\bigg(
  \big(\lambda -wt(\calL|_f) -\rho(T_f X) -\sum_{wts(T_f Y / T_f X)} \mu,\quad
  d - \codim_Y X\big)\ ; \\ 
  && \qquad\qquad\qquad
  \{ (-\mu,0) : \mu\in wts_+(T_f X) \setminus \Delta^K_+ \} \ \coprod\
  \{ ( \mu,1) : \mu\in wts(T_f Y/T_f X) \}     \bigg)
\end{eqnarray*}
The $T_K$-fixed points on $\calO_1 \cap \calO_2$ look like
$$
\begin{array}{c|c|c|c|c|c}
f \in \calO_1^{T_K} & wt(\calL|_f) 
& wts(T_f X) & wts(T_f Y)\setminus wts(T_f X) & ind(T_f X) & \rho(T_f X) \\
\hline
(e_1 < e_1 \oplus e_2) & (a,b)    &\{1\}  & \{ 2, 1\} & 0 & 0  \\ 
(e_3 < e_2 \oplus e_3) & (-b,-a)  &\{-1\} & \{-2,-1\} & 1 & -1    
\end{array}	
$$
giving two terms
$$
   \kappa\big((\lambda-(a+b)-0-3,d-2);\ \{(1,1),(2,1)\}\big)
\ -\  \kappa\big((\lambda+(a+b)+1+3,d-2);\ \{(-1,1),(-2,1)\}\big) 
$$
the second of which obviously vanishes, the first
simplifying to $\kappa_{d-2}(\lambda-(a+b)-3;\ \{1,2\})$.
Now we sum over $d$ to get $\kappa(\lambda-(a+b)-3;\ \{1,2\})$ as expected.

\subsection{The orbit $\calO_1$}\label{ssec:O1}
Recall $\calO_1 = \{(V_1 < V_2) : rank(V_1) = 0\}$. Its $T_K$-fixed points 
look like
$$
\begin{array}{c|c|c|c|c|c}
f \in \calO_1^{T_K} & wt(\calL|_f) 
& wts(T_f X) & wts(T_f Y)\setminus wts(T_f X) & ind(T_f X) & \rho(T_f X) \\
\hline
(e_1 < e_1 \oplus e_2) & (a,b)    &\{1,1\}   & \{ 2\} & 0 & 0  \\ 
(e_1 < e_1 \oplus e_3) & (a+b,-b) &\{-1,1\}  & \{ 2\} & 1 & -1 \\ 
(e_3 < e_1 \oplus e_3) & (b,-a-b) &\{-1,1\}  & \{-2\} & 1 & -1 \\ 
(e_3 < e_2 \oplus e_3) & (-b,-a)  &\{-1,-1\} & \{-2\} & 2 & -2    
\end{array}	
$$
(where one can figure out the weight on the normal bundle
from the fact that the equation defining $\calO_1$ is quadratic),
giving the terms
$$
\begin{array}{rlcccccc}
  & &\kappa\big(\ (\lambda& - (a+b)& -0&- 2,&d-1);&\ \{(-1,0),\ (2,1)\}\ \big)\\
  &-&\kappa\big(\ (\lambda& - a    & +1&- 2,&d-1);&\ \{(-1,0),\ (2,1)\}\ \big)\\
  &-&\kappa\big(\ (\lambda& + a    & +1&+ 2,&d-1);&\ \{(-1,0),(-2,1)\}\ \big)\\
  &+&\kappa\big(\ (\lambda& + (a+b)& +2&+ 2,&d-1);&\ \{(-1,0),(-2,1)\}\ \big) 
\end{array}
$$
Each of the latter two vector partition functions $\kappa(\vec v;S)$ obviously 
vanishes, since the first coordinate of $\vec v$ is strictly positive, 
but the first coordinate of each $\vec s\in S$ is nonpositive.

In the first two (and the dropped two) terms, we know that the second
vector $(\pm 2,1)$ in the vector partitions must be used $d-1$ times. 
So subtract that off and drop the coordinate:
\begin{eqnarray*}
  & & \kappa(\lambda - (a+b) - 2 -2(d-1));\ \{-1\}) 
  \quad-\quad \kappa(\lambda - a -1 -2(d-1));\ \{-1\}) \\
  &=& [\lambda - a-b \leq 2d] \quad -\quad [\lambda - a + 1 \leq 2d] \\
  &=& [\lambda-a-b \ \leq\ 2d \ <\ \lambda-a+1]
  \qquad\qquad\quad =\quad [\lambda-a-b \ \leq\ 2d \ \leq \lambda-a]
\end{eqnarray*}
As predicted in theorem \ref{thm:geometry}, this is nonnegative for every $d$.
When we sum over $d\in\naturals$, or $d':=2d \in 2\naturals$,
we get the multiplicity of
the $\lambda$ $SO(3)$-irrep inside our $(\lie{sl}(3),SO(3))$-irrep.
To prove theorem \ref{thm:main} in this case, we need to correspond
$ \{ d'\ :d'\in [\lambda-a-b, \lambda-a]\text{ even} \}$ with the
points in $C_{\calO_1} = \{(x,y)\ :\ x > a,\ x \equiv a \bmod 2,\ y \in [0,b] \}$.
The correspondence is 
$$ d' = \lambda-a-k \ \mapsto\ (\lambda-k,\ k). $$
This works since $2|d' \iff \lambda-k \equiv a \bmod 2$, 
and $d' \in [\lambda-a-b,\lambda-a] \iff k \in [0,b]$.

\subsection{Duality and the orbit $\calO_2$}\label{ssec:O2}

The duality automorphism 
$(V_1 < V_2) \stackrel{*}{\mapsto} (V_2^\perp < V_1^\perp)$ of $G/B_G^-$
acts on $K$-orbit closures, $\calO \mapsto \calO^*$,
exchanging $\calO_1,\calO_2$ and fixing the other two.
Pulling back the line bundle switches $(a,b)$ to $(b,a)$, 
giving $SO(3)$-isomorphic representations. 
So the $K$-multiplicity calculation for $\calO_2$ is
easily copied from the one just done for $\calO_1$.
However, the $SO(3)$-multiplicity formul\ae\ are not then visibly the same, 
which we fix with the following.

\begin{Theorem}
  The map $ (c,d) \mapsto
  \begin{cases}
    \qquad (d,\ c)     &\text{if $d \equiv a\bmod 2$} \\
    (d+1,c-1) &\text{if $d \not\equiv a\bmod 2$} 
  \end{cases}
  $
  \quad bijects $C$ with itself, and $C_\calO(x,y)$ with $C_{\calO^*}(y,x)$,
  where $C_\calO(x,y)$ means $C_\calO$ with parameters $(a,b) = (x,y)$. \\
  Moreover, this duality on $C$ preserves the projection $(c,d)\mapsto c+d$.
\end{Theorem}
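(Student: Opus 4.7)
The plan is a direct verification, splitting the claim into four manageable steps.

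First, I would dispose of the projection claim, which is immediate: both branches preserve $c + d$, since $(d, c)$ has sum $c + d$ and $(d+1, c-1)$ has sum $(d+1)+(c-1) = c+d$. This also handles the ``moreover'' clause.

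Second, I would check that the map takes $C$ to itself and is an involution. The image's first coordinate is $d$ or $d+1$, and by the branch condition each is congruent to $a$ mod $2$, matching the parity constraint in $C$. For nonnegativity of the second coordinate $c - 1$ in the shift branch, I would use the observation that $(c, d) \in C$ with $c = 0$ requires $d \equiv b \bmod 2$; in the parity-aligned setting (where $a \equiv b \bmod 2$, so that $C(a, b) = C(b, a)$) this forces case 1, so case 2 has $c \geq 1$. The boundary clause of $C$ at image first coordinate $0$ is handled symmetrically. The involution property then follows by composition: in each branch the image's second coordinate has the opposite parity-classification to $c$, so both branches are self-composed into the identity by direct substitution.

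Third, for the region swap $C_\calO(x, y) \leftrightarrow C_{\calO^*}(y, x)$, I would check the four regions and two branches mechanically. The swap branch $(c, d) \mapsto (d, c)$ exchanges the defining inequalities involving $a$ and $b$, realizing $\calO_1 \leftrightarrow \calO_2$ and fixing $G/B_G^-$ and $\calO_1 \cap \calO_2$. The shift branch $(c, d) \mapsto (d+1, c-1)$ requires the parity-strengthened strict inequalities: $c > a$ together with $c \equiv a \bmod 2$ forces $c \geq a + 2$, so $c - 1 > a$; similarly $d > b$ gives $d + 1 > b$. The weak inequalities $c \leq a$ and $d \leq b$ pass through after shifting, using (in case 2) that the parity constraint excludes $d = b$.

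The main obstacle I anticipate is the bookkeeping at the edge $c = 0$ of $C$, where the special clause in the definition of $C$ interacts nontrivially with the branch selection. The crucial observation --- that $(c, d) \in C$ with $c = 0$ always triggers the swap branch, never the shift branch (which would produce a negative coordinate) --- makes the map well-defined and reduces everything else to mechanical verification.
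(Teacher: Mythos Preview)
The paper gives no proof beyond the remark that the map ``flips figure \ref{fig:Cpic} upside down'' and jiggles the points that land on forbidden lines, leaving the ``very simple details'' to the reader. Your direct case-by-case verification is precisely that omitted argument, and you correctly isolate the boundary $c=0$ as the only place requiring care.

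One caveat: the parity-alignment hypothesis $a\equiv b\bmod 2$ that you invoke is not part of the statement, and without it the map as written does not carry $C(a,b)$ into itself --- for $a$ even and $b$ odd the point $(0,b)\in C$ falls into case~2 and is sent to $(b+1,-1)\notin\naturals^2$. The cleanest reading, consistent with how the theorem is actually used in \S\ref{ssec:O2}, is that the map goes from $C(b,a)$ to $C(a,b)$, the branch condition being keyed to the \emph{target}'s first parameter $a$. Under that reading your key observation (that $c=0$ in the source forces case~1, since the source's boundary clause gives $d\equiv a$) holds without any extra hypothesis, and all of your region checks go through verbatim after swapping the roles of $a$ and $b$ on the source side. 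This is an imprecision in the paper's phrase ``bijects $C$ with itself'' rather than a defect in your method.
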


We leave the very simple details to the reader.
Essentially, the map flips figure \ref{fig:Cpic} upside down;
half the dots land on the illegal brown dashed lines, so we jiggle
them upward to good positions, leaving holes along the bottom edge 
where holes belong.

Now invoke this combinatorial duality to reduce theorem \ref{thm:main} 
for $\calO_2$ to the just-proven result for $\calO_1$.

\subsection{The open orbit}
Now all $3!$ fixed points are in our orbit closure, looking like
$$
\begin{array}{c|c|c|c|c}
f \in \calO_1^{T_K} & wt(\calL|_f) 
& wts(T_f X) & ind(T_f X) & \rho(T_f X) \\
\hline
(e_1 < e_1 \oplus e_2) & (a,b)    &\{1,1,2\}   & 0 &  0 \\ 
(e_1 < e_1 \oplus e_3) & (a+b,-b) &\{-1,1,2\}  & 1 & -1 \\ 
(e_2 < e_1 \oplus e_2) & (-a,a+b) &\{1,-1,2\}  & 1 & -1 \\ 
(e_2 < e_2 \oplus e_3) & (-a-b,a) &\{1,-1,-2\} & 2 & -3 \\ 
(e_3 < e_1 \oplus e_3) & (b,-a-b) &\{1,-1,-2\} & 2 & -3 \\ 
(e_3 < e_2 \oplus e_3) & (-b,-a)  &\{-1,-1,-2\}& 3 & -4    
\end{array}	
$$
and giving the (essentially Kostant multiplicity) formula
$$
\begin{array}{rlccccl}
  & &\kappa\big(\ (\lambda& -(a+b)& -0,&d);&\ \{(-1,0),(-2,0)\}\ \big)\\
  &-&\kappa\big(\ (\lambda& -a    & +1,&d);&\ \{(-1,0),(-2,0)\}\ \big)\\
  &-&\kappa\big(\ (\lambda& -b    & +1,&d);&\ \{(-1,0),(-2,0)\}\ \big)\\
  &+&\kappa\big(\ (\lambda& +b    & +3,&d);&\ \{(-1,0),(-2,0)\}\ \big)\\
  &+&\kappa\big(\ (\lambda& +a    & +3,&d);&\ \{(-1,0),(-2,0)\}\ \big)\\
  &+&\kappa\big(\ (\lambda& +(a+b)& +4,&d);&\ \{(-1,0),(-2,0)\}\ \big),
\end{array}
$$
nicely symmetric in $a,b$.
Each term vanishes for $d\neq 0$, so take $d=0$ and eliminate the
second coordinate. The last three terms are obviously zero. What remains is
$$
              \kappa(\lambda -(a+b)   ;\ \{-1,-2\}\ )
  \quad-\quad \kappa(\lambda -a     +1;\ \{-1,-2\}\ )
  \quad-\quad \kappa(\lambda -b     +1;\ \{-1,-2\}\ )
$$
Each term contributes a multiplicity function like 
``$\ldots,3,3,2,2,1,\underline 1,0,\ldots$''
where the $\underline 1$ is at $a+b$, $a-1$, or $b-1$.

Consider the set $D = \{(c,d) \in -\naturals^2 : c \equiv 0 \bmod 2\}$,
whose projection $(c,d) \mapsto c+d \in -\naturals$ also gives this
"$\ldots,3,3,2,2,1,\underline 1,0,\ldots$" multiplicity function 
on $-\naturals$, with the $\underline 1$ at $0$. 
Then the translate $(a,b)+D$ contains $C_{G/B_G^-}$; in figure \ref{fig:Cpic},
$(a,b)+D$ is the quadrant to the West of $C_{G/B_G^-}$'s East corner.
If we take the difference $(a,b)+D \setminus (a,-1)+D$
of translates, we get a half-infinite strip with the same Northeast side
as $C_{G/B_G^-}$, and this difference projects to the multiplicity function
$\kappa(\lambda -(a+b)   ;\ \{-1,-2\}\ )
- \kappa(\lambda -a     +1;\ \{-1,-2\}\ )$. 

It remains to subtract off $\kappa(\lambda -b     +1;\ \{-1,-2\}\ )$
using another translate of $D$. There are two cases. 

\newcommand\bfchi[1]{{{\bf\chi}\big( #1 \big)}}
The easy case is $a$ odd, where $(a,b)+D$ contains $(-1,b)+D$,
to the Southwest of $C_{G/B_G^-}$.
The subtlety is that $(a,b)+D \setminus (a,-1)+D$ does not
contain $(-1,b)+D$,
but the missing points $(c,d)$ all have $c+d < 0$, so they don't
affect the point-count in the region we care about.
Put another way, if we use $\bfchi{S}$ to indicate the characteristic
function of the set $S$, then
$$ + \bfchi{(a,b)+D} - \bfchi{(a,-1)+D} - \bfchi{(-1,b)+D} 
= + \bfchi{C_{\calO_1\cap \calO_2}} \quad\text{on $\{(c,d) : c+d\geq 0\}$} $$
so the two sides induce the same counts when projected to $\naturals$.

The tricky case is $a$ even, where we attempt to subtract off $(-2,b+1)+D$
from $(a,b)+D \setminus (a,-1)+D$. Even on the $c+d\geq 0$ side,
we don't quite get $C_{G/B_G^-}$, but instead
$$ 
  + \ \bfchi{ (c,d) \in \naturals^2 : c \equiv a \bmod 2, c\leq a, d\leq b}
\ - \ \bfchi{ (-2-2n,b+1) : n \geq 0 } 
$$
or equivalently
$$ 
  + \ \bfchi{C_{G/B_G^-}}
  + \ \bfchi{ (0,d) \in \naturals^2 : d\leq b, d\equiv b-1 \bmod 2 }
\ - \ \bfchi{ (-2-2n,b+1) : n \geq 0 } 
$$
(again, this computation is only correct over $c+d\geq 0$). If we project
the latter two terms to $\naturals$ using $(c,d)\mapsto c+d$, they become
$$ 
  + \ \bfchi{ d \in \naturals : d\leq b, d\equiv b-1 \bmod 2 }
\ - \ \bfchi{ b-1-2n : n \geq 0 } 
$$
and visibly cancel.
Hence in either the $a$ even or odd cases, the projection of $C_{G/B_G^-}$
correctly computes the $SO(3)$-multiplicities in the
finite-dimensional $SL(3)$-irrep $V_{(a,b)}$.

Note that the $SO(3)$-multiplicities in $V_{(a,b)}$ and $V_{(b,a)}$
are the same, which is not manifest from counting points in $C_{G/B_G^-}$;
this symmetry can be seen combinatorially using the bijection
from \S \ref{ssec:O2}.

In the case $a,b$ both even, the $SO(3)$-multiplicities start out 
$1,0,2,1,\ldots$, i.e. they include every $SO(3)$-irrep in $[0,a+b]$ except
for a hole at $1$ (the $3$-dim representation). Such holes never occur
in weight multiplicities or $GL(n)$ tensor product multiplicities \cite{KT}.
Another study of such holes in the Blattner situation appeared in \cite{H}.


\begin{thebibliography}{xxxx}

\bibitem[BS97]{BS} {\sc Wilfrid Schmid (notes by Vernon Bolton):} 
  Discrete Series, 
  Representation Theory and Automorphic Forms, 
  Proceedings of Symposia in Pure Mathematics 61 (1997), 
  American Mathematical Society,  pp. 83--113.
  \url{http://www.math.harvard.edu/~schmid/articles/disc_ser.dvi}

\bibitem[GLS96]{GLS} {\sc Victor Guillemin, Eugene Lerman, Shlomo Sternberg:}\\
  Symplectic fibrations and multiplicity diagrams,
  Cambridge University Press (1996).

\bibitem[Ha08]{H} {\sc Mark\'eta Havl\'i\v ckov\'a:}
  Boundaries of $K$-types in discrete series, MIT thesis 2008. \\
  \url{http://dspace.mit.edu/handle/1721.1/43796}

\bibitem[Ho00]{Howe} {\sc Roger Howe:}
  $K$-type structure in the principal series of $GL_3$, I, \\
  Adv. Stud. Pure Math., 26 (2000), pp. 77--98.
  \url{http://www.mathbooks.org/aspm/aspm26/aspm26.pdf}

\bibitem[KS98]{KS} {\sc Allen Knutson, Eric Sharpe:}
  Sheaves on toric varieties for physics, \\
  Advances in Theoretical and Mathematical Physics, 2 (1998), issue 4.

\bibitem[KT99]{KT} {\sc Allen Knutson, Terence Tao:}
  The honeycomb model of tensor products I: proof of the saturation conjecture,
  J. Amer. Math. Soc. 12 (1999), 1055--1090.

\bibitem[KV95]{KV} {\sc Anthony W. Knapp, David A. Vogan:}
  Cohomological induction and unitary representations,
  Princeton University Press, 1995.

\bibitem[Mi93]{M} {\sc Dragan Mili\v ci\'c:} 
  Algebraic $\calD$-modules and representation theory of semisimple Lie groups,
  \\  Contemporary Mathematics, Vol. 154 (1993), 133--168. \\
  \url{http://www.math.utah.edu/~milicic/Eprints/penrose.pdf}

\bibitem[MaO]{MO} {\sc Davesh Maulik, Andrei Okounkov:}
  Quantum groups and quantum cohomology, \\
  preprint 2012. \url{http://arxiv.org/abs/1211.1287}

\bibitem[T92]{T} {\sc Robert W. Thomason:} 
  Une formule de Lefschetz en K-th\'eorie \'equivariante alg\'ebrique, \\
  Duke Math. J. 68 (1992), 447--462.

\bibitem[WZ08]{WZ08} {\sc Jeb Willenbring, Gregg Zuckerman:}
  A generating function for Blattner's formula,
  Proceedings of the American Mathematical Society 136 (2008), 
  no. 6, 2261--2270.
  \url{http://arxiv.org/abs/math/0612266}


\end{thebibliography}
\end{document}